\numberwithin{equation}{section}
\newtheorem{teo}{Theorem}[section]
\newtheorem{pro}[teo]{Proposition}
\newtheorem{lem}[teo]{Lemma}
\newtheorem{cor}[teo]{Corollary}
\newtheorem{rem}[teo]{Remark}
\def\k{\kappa}
\newcommand{\bmultg}{\!\begin{array}{c} {\scriptstyle\times}
    \\[-12pt]\cup\end{array}\!}
\title{Spectral distribution of the free Jacobi process, revisited}
\author[T. Hamdi]{Tarek Hamdi}
\address{Department of Management Information Systems \\ College of Business Administration\\ Qassim University \\ Saudi Arabia 
and Laboratoire d'Analyse Math\'ematiques et applications \\ LR11ES11 \\ Universit\'e de Tunis El-Manar \\ Tunisie.
}
\email{tarek.hamdi@mail.com}
\begin{document}
\begin{abstract}
We obtain a description for the spectral distribution of the free Jacobi process for any initial pair of projections. This result relies on a study of the unitary operator $RU_tSU_t^*$ where
 $R,S$ are two symmetries and $U_t$ a free unitary Brownian motion, freely independent from $\{R,S\}$.  In particular, for non-null traces of $R$ and $S$, we prove that the spectral measure of $RU_tSU_t^*$ possesses two atoms at $\pm1$ and an $L^\infty$-density on the unit circle $\mathbb{T}$, for every $t>0$. Next, via a  Szeg\H{o} type transform of this law, we obtain a full description of the spectral distribution of $PU_tQU_t^*$ beyond the $\tau(P)=\tau(Q)=1/2$ case. Finally, we give some specializations for which these measures are explicitly computed.
\end{abstract}
\maketitle

\section{Introduction}
Let $P,Q $ be two projections  in a $W^*$-probability space  $(\mathscr{A},\tau)$ which are free with $\{U_t,U_t^*\}$. 
The present paper is a companion to the series of papers \cite{Col-Kem,Dem,Demni,Dem-Ham,DHH,Dem-Hmi} devoted to the study of the spectral distribution, hereafter $\mu_t$, of the self-adjoint-valued process $(X_t:=PU_tQU_t^*P)_{t\geq0}$. 
Viewed in the compressed algebra  $(P\mathscr{A}P,\tau/\tau(P))$, $X_t$ coincide with the so-called free Jacobi process with parameter $\left(\tau(P)/\tau(Q),\tau(Q)\right)$, introduced by Demni in \cite{Dem} via free stochastic calculus, as solution to a free SDE there.
Properties of its measure play important roles in free entropy and free information theory (see e.g. \cite{Tar,Tarek,Hia-Ued,Izu-Ued,Voi}).
Furthermore, $\mu_t$ completely determines the structure of the von Neumann algebra generated by $P$ and $U_tQU_t^*$ (see e.g. \cite{Hia-Ued,Rae-Sin}) for any $t\geq0$, yielding a continuous interpolation from the law of $PQP$ (when $t=0$) to the free multiplicative convolution of the spectral measures of $P$ and $Q$ separately (when $t$ tends to infinity). Indeed,
 the pair $(P,U_tQU_t^*)$ tends towards $(P,UQU^*)$ as $t\rightarrow\infty$, where $U$ is a Haar unitary free from $\{P,Q\}$.  The two projections $P$ and $UQU^*$ are therefore free (see \cite{Nic-Spe}) and hence  $\mu_{PUQU^*P}=\mu_P\boxtimes\mu_{UQU^*}=\mu_P\boxtimes\mu_Q$. This measure was explicitly computed in \cite[Example 3.6.7]{Dyk-Nic-Voi}.
Generally, the operators $P$ and $U_tQU_t^*$ are not free for finite $t$ and the process $t\mapsto(P,U_tQU_t^*)$ is known as the free liberation of the pair $(P,Q)$ (cf. \cite{Voi}). 
When both projections coincide, the series of papers \cite{Demni,Dem-Ham,DHH,Dem-Hmi} aim to determine $\mu_t$ for any $t>0$. 
In particular, when $P=Q$ and $\tau(P)=1/2$, Demni, Hmidi and myself proved in \cite[Corollary 3.3]{DHH} that the measure $\mu_t$ possesses a continuous density on $(0,1)$ for $t>0$ which fits that of the random variable $(I+U_{2t}+(I+U_{2t})^*)/4$.
In \cite{Col-Kem}, Collins and Kemp extended this result to the case of two projections $P,Q$ with traces 1/2. 
Afterwards this result was partially extended by Izumi and Ueda to the arbitrary traces case. They proved the following.
\begin{align*}
\mu_t=(1-\min\{\tau(P),\tau(Q)\})\delta_{0}+\max\{\tau(P)+\tau(Q)-1,0\}\delta_1+\gamma_t
\end{align*}
where  $\gamma_t$ is a positive measure with no atom on $(0,1)$ for every $t>0$ (cf. \cite[Proposition 3.1]{Izu-Ued}). When $\tau(P)=\tau(Q)=1/2$, this measure coincide with the Szeg\H{o} transformation of the distribution of $UU_t$ where $U$ is a unitary random variable determined by the law of $PQP$ (cf. \cite[Proposition 3.3]{Izu-Ued}).
In \cite[Lemma 3.2, Lemma 3.6]{Col-Kem}, Collins and Kemp studied the support of the measure $\gamma_t$, for the general case of traces, and the way in which the edges of this support are propagated, but they were still not able to prove the continuity of $\gamma_t$.

Our major result in these notes is a complete analysis of the spectral distribution of  the unitary operator $RU_tSU_t^*$ (hereafter $\nu_t$) for any symmetries $R,S\in \mathscr{A}$ which are free with $\{U_t,U_t^*\}$.  In particular, we prove that the measure
\begin{align*}
\nu_t-\frac{1}{2}\left|\tau(R)-\tau(S)\right|\delta_\pi-\frac{1}{2}|\tau(R)+\tau(S)|\delta_0
\end{align*}
possesses an $L^\infty$-density $\kappa_t$ on $\mathbb{T}=(-\pi,\pi]$. Using the relationship between $\mu_t$ and $\nu_t$, when $\{P,Q\}$ and $\{R,S\}$ are associated (cf. \cite[Theorem 4.3]{Tar}), we deduce the regularity of 
$\mu_t$ for any initial projections. In particular, we prove that the measure $\gamma_t$ possesses a continuous density on $[0,1]$. Here is our result.

 \begin{teo}\label{description mu}
 Let $P,Q$ be orthogonal projections and  $U_t$ a free unitary Brownian motion, freely independent from $P,Q$.
For every $t>0$, the spectral distribution $\mu_t$ of the self adjoint operator $PU_tQU_t^*P$ is given by
\begin{align*}
\mu_t=(1-\min\{\tau(P),\tau(Q)\})\delta_{0}+\max\{\tau(P)+\tau(Q)-1,0\}\delta_1+\frac{\kappa_t(2\arccos(\sqrt{x}))}{2\pi\sqrt{x(1-x)}}{\bf 1}_{[0,1]}(x)dx.
\end{align*}
\end{teo}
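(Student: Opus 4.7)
My plan is to reduce Theorem~\ref{description mu} to the structural result on $\nu_t$ established earlier in the paper, combined with the Szeg\H{o}-type correspondence of \cite[Theorem~4.3]{Tar} and the Izumi--Ueda atom formula recalled in the introduction.

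First, I would introduce the associated pair of symmetries $R:=2P-I$ and $S:=2Q-I$. These remain freely independent from $\{U_t,U_t^*\}$ since they generate the same von Neumann algebra as $\{P,Q\}$, and a direct computation gives $\tfrac{1}{2}|\tau(R)-\tau(S)| = |\tau(P)-\tau(Q)|$ together with $\tfrac{1}{2}|\tau(R)+\tau(S)| = |\tau(P)+\tau(Q)-1|$. Applying the main structural result of the paper, the spectral measure $\nu_t$ of $RU_tSU_t^*$ splits into atoms of these masses at $\theta=\pi$ and $\theta=0$, plus an $L^\infty$ density $\kappa_t$ on $\mathbb{T}$. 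Moreover, since $RU_tSU_t^*$ and its adjoint $U_tSU_t^*R$ share the same spectrum, $\nu_t$ is invariant under $\theta\mapsto-\theta$, so $\kappa_t$ is an even function.

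Next, I would invoke \cite[Theorem~4.3]{Tar}, which relates the law of $PU_tQU_t^*P$ to that of $RU_tSU_t^*$ by a Szeg\H{o}-type transform along the $2$-to-$1$ map $\theta\mapsto x=\cos^2(\theta/2)=(1+\cos\theta)/2$. Using the evenness of $\kappa_t$ and the Jacobian $|d\theta/dx|=1/\sqrt{x(1-x)}$, the absolutely continuous part of $\mu_t$ on $(0,1)$ works out to
\[
\frac{\kappa_t(2\arccos\sqrt{x})}{2\pi\sqrt{x(1-x)}}\,{\bf 1}_{(0,1)}(x)\,dx,
\]
the factor $\tfrac{1}{2}$ being absorbed in the correspondence to account for the $2$-to-$1$ nature of the map; a mass check confirms that the density mass of $\nu_t$ is exactly twice that of $\mu_t|_{(0,1)}$. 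The atom masses at $x=0$ and $x=1$ are then supplied directly by the Izumi--Ueda formula recalled in the introduction, and assembling the two parts yields the claimed decomposition of $\mu_t$.

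The main subtlety, and where some bookkeeping is required, lies at the endpoints: the atom of $\nu_t$ at $\theta=\pi$ (mass $|\tau(P)-\tau(Q)|$) must combine with the kernel-of-$P$ contribution $(1-\tau(P))\delta_0$ to recover $(1-\min\{\tau(P),\tau(Q)\})\delta_0$, while the atom of $\nu_t$ at $\theta=0$ (mass $|\tau(P)+\tau(Q)-1|$) must match with $\max\{\tau(P)+\tau(Q)-1,0\}\delta_1$. These identifications are consistent because the Izumi--Ueda formula already pins the atom masses of $\mu_t$; my contribution then provides only the explicit density, and the full decomposition of $\mu_t$ follows by adding the atoms to it.
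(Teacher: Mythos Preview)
Your proposal is correct and follows essentially the same route as the paper: both arguments combine the structural decomposition of $\nu_t$ (Theorem~\ref{descrip}) with the Szeg\H{o}-type correspondence $\nu_t=2\hat\mu_t-\tfrac{2-\alpha-\beta}{2}\delta_\pi-\tfrac{\alpha+\beta}{2}\delta_0$ from \cite[Theorem~4.3]{Tar} and then perform the change of variable $\theta=2\arccos\sqrt{x}$. The only cosmetic difference is that the paper reads off the atom masses directly from this identity (the constants $\tfrac{2-\alpha-\beta}{2}+a$ and $\tfrac{\alpha+\beta}{2}+b$ simplify to $2(1-\min\{\tau(P),\tau(Q)\})$ and $2\max\{\tau(P)+\tau(Q)-1,0\}$), whereas you import them from Izumi--Ueda and then verify consistency; your ``endpoint bookkeeping'' paragraph is thus unnecessary once you use the \cite{Tar} identity in full.
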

We conclude the paper with the following `unexpected' identities for the measure $\nu_t$ when the initial operators are assumed to be freely, classically, boolean and monotone independent with law $\frac{\delta_1+\delta_{-1}}{2}$. We have  $\nu_t$ is constant in $t$ in the first case, and its given by a dilation of the law of $U_t$ in the rest of cases. The result is as follows.
\begin{teo}\label{convol}
Let $\lambda_t$ be the probability distribution of the free unitary Brownian motion $U_{t}$ and $\mu=\frac{\delta_1+\delta_{-1}}{2}$ (considered as a law on $\mathbb{T}$). We denote respectively by $\boxtimes,*,\bmultg$ and $\triangleright$  the free, classical, boolean and monotone multiplicative convolutions.
Then, for all $t\geq0$,
\begin{enumerate}
\item The measure $\big(\mu\boxtimes\mu\big)\boxtimes\lambda_{t}$  coincide with $\mu\boxtimes\mu$.
\item The push-forward of $\big(\mu*\mu\big)\boxtimes\lambda_{t}$ by the map $z\mapsto z^2$ coincide with the law of $U_{2t}$.
\item The push-forward of $\big(\mu\bmultg\mu\big)\boxtimes\lambda_{t}$ by the map $z\mapsto z^3$ coincide with the law of $U_{3t}$.
\item The push-forward of $\big(\mu\triangleright\mu\big)\boxtimes\lambda_{t}$ by the map $z\mapsto z^4$ coincide with the law of $U_{4t}$.
\end{enumerate}
 \end{teo}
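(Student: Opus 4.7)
My plan is to identify $\mu\square\mu$ in each case as the Haar measure on a suitable (possibly finite) cyclic subgroup of $\mathbb{T}$, and then to reduce the four claims to a single freeness phenomenon. A direct computation gives $\psi_\mu(z)=z^2/(1-z^2)$, hence $\eta_\mu(z)=z^2$. Using the standard transform rules---$\eta$-composition for multiplicative monotone convolution, $\eta_{\mu\triangleright\nu}=\eta_\mu\circ\eta_\nu$, and the Boolean multiplicative rule $\eta_{\mu\bmultg\nu}(z)=\eta_\mu(z)\eta_\nu(z)/z$---we obtain $\mu*\mu=\mu$, $\eta_{\mu\bmultg\mu}(z)=z^3$, and $\eta_{\mu\triangleright\mu}(z)=z^4$. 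Thus $\mu\square\mu$ is the Haar measure on the group of $k$-th roots of unity for $k=2,3,4$ in the three cases respectively. For the free case, the freeness of two centered symmetries $R,S\sim\mu$ makes every word $(RS)^n$ a centered alternating product, which is therefore traceless, so $\mu\boxtimes\mu$ is the full Haar measure on $\mathbb{T}$.

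The key lemma driving items (2)--(4) is the following: \emph{if $Y$ is a unitary with $Y^k=1$ and $\tau(Y^j)=0$ for $1\leq j\leq k-1$, free from a subalgebra $\mathcal{B}$, then the conjugate subalgebras $Y^j\mathcal{B}Y^{-j}$ for $0\leq j\leq k-1$ are mutually free.} To prove it, consider an alternating product $c_1c_2\cdots c_n$ where $c_i=Y^{j_i}b_iY^{-j_i}$, $b_i\in\mathcal{B}$ centered and $j_i\neq j_{i+1}$. Collapsing the internal pairs $Y^{-j_i}Y^{j_{i+1}}$ into $Y^{j_{i+1}-j_i}$ and cycling $Y^{-j_n}$ to the front produces an alternating word between $\mathcal{A}_Y$ and $\mathcal{B}$ whose $Y$-exponents are all nonzero modulo $k$: the internal ones $j_{i+1}-j_i\in\{\pm1,\ldots,\pm(k-1)\}$ by the alternation condition; the wrap-around one $j_1-j_n$ either by $j_1\neq j_n$, or, when $j_1=j_n$, by collapsing to the identity and leaving a $\mathcal{B}$-to-$\mathcal{B}$ alternating word of length $2n-1$. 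In either case every factor is centered, so freeness of $\mathcal{A}_Y$ with $\mathcal{B}$ forces the trace to vanish.

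Items (2)--(4) then follow from the telescoping identity
\[(YU_t)^k=\prod_{j=1}^{k-1}\bigl(Y^jU_tY^{-j}\bigr)\cdot Y^kU_t=V_1V_2\cdots V_{k-1}V_0,\]
with $V_0:=U_t$ and $V_j:=Y^jU_tY^{-j}$ (using $Y^k=1$); by the lemma the $V_j$'s are mutually free, each of law $\lambda_t$, and the free-increment property $\lambda_s\boxtimes\lambda_t=\lambda_{s+t}$ of the free unitary Brownian motion yields $(YU_t)^k\sim\lambda_{kt}$, which is the claimed push-forward identity. Item (1) is handled separately: we already saw that $\mu\boxtimes\mu$ is Haar, and the Haar measure absorbs free multiplicative convolution---the same centered-alternating argument shows that every nontrivial moment of $XU_t$ vanishes when $X$ is Haar and free from $U_t$, so $(\mu\boxtimes\mu)\boxtimes\lambda_t=\mu\boxtimes\mu$. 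The main obstacle I anticipate is the mutual-freeness lemma, specifically the wrap-around bookkeeping of $Y$-exponents modulo $k$ that distinguishes the $j_1=j_n$ case; once that is verified cleanly, the remaining steps (the $\eta$-transform computations, the telescoping, and the invocation of the FUBM semigroup) are essentially bookkeeping.
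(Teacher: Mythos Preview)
Your argument is correct and follows a genuinely different route from the paper. The paper proceeds analytically: for centered $R,S$ the Herglotz transform $H(t,\cdot)$ of $\nu_t=\nu_0\boxtimes\lambda_{2t}$ solves the driftless PDE $\partial_tH+zH\partial_zH=0$, and in the classical, boolean and monotone cases the initial datum is computed to be $H(0,z)=(1+z^k)/(1-z^k)$ with $k=2,3,4$ respectively; since $(t,z)\mapsto H_{\lambda_{2kt}}(z^k)$ solves the same initial-value problem, uniqueness yields the push-forward statement (the free case is handled instead by observing that $K(0,\cdot)$, hence $K(t,\cdot)$, is constant, forcing $\nu_t\equiv\nu_\infty$). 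Your approach is operator-algebraic: once the initial law is recognized as the Haar measure on the $k$-th roots of unity, a unitary realization $Y$ free from $U_t$ lets you telescope $(YU_t)^k$ into a product of the $k$ conjugates $Y^jU_tY^{-j}$, which your conjugation-freeness lemma shows to be mutually free copies of $U_t$, and the semigroup identity $\lambda_t^{\boxtimes k}=\lambda_{kt}$ finishes. Your method gives a transparent structural explanation of why the exponent $k$ and the time-dilation factor coincide, and it bypasses the PDE machinery entirely; the paper's method has the virtue of staying within the analytic framework built for the main theorems and of handling all four items through the same characteristic-curve picture. The wrap-around bookkeeping you flag as the main obstacle is routine---the case $j_1=j_n$ only arises for $n\ge 3$, and the resulting length-$(2n-1)$ alternating word has all factors centered by construction---and the lemma itself is a standard consequence of freeness and traciality, so there is no real difficulty there.
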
 

The paper is organized as follows. 
We start in Section 2 with some preliminaries which gathers useful information about  the Herglotz transform of probability measures on the unit circle, and the spectral distribution of the free unitary Brownian motion.
Section 3 fixes the basic ideas and notations for the rest of the work presented.
Section 4 deals with regularity properties of the spectral measure $\nu_t$ and  gives a proof of the Theorem \ref{description mu}.
Section 5 consists in explicit computations of densities in certain special cases for initial operators.
 
 \section{Preliminaries}
This section gives a concise review about some ideas we will use to prove our main results.
 
 \subsection{The Herglotz transform}
 Let $\mathscr{M}_\mathbb{T}$ denotes the set of probability measures on the unit circle $\mathbb{T}$.  The normalized Lebesgue measure on  $\mathbb{T}$ will be denoted $m$.
 The Herglotz transform $H_\mu$ of a measure $\mu\in \mathscr{M}_\mathbb{T}$ is the analytic function in the unit disc $\mathbb{D}$ defined by the formula
 \begin{equation*}
H_\mu(z)=\int_{\mathbb{T}}\frac{\zeta+z}{\zeta-z}d\mu(\zeta).
\end{equation*}
This function is related to the moments generating function of the measure $\mu$
\begin{equation*}
\psi_{\mu}(z)=\int_{\mathbb{T}}\frac{z}{\zeta -z}d\mu(\zeta), \quad z\in \mathbb{D}
\end{equation*}
by the simple formula $H_{\mu}(z)=1+2\psi_{\mu}(z)$. Since any distribution on the unit circle is uniquely determined by its moments, we deduce that $H_\mu$ determines uniquely $\mu$.
One of its major importance  is due to the following result (see e.g. \cite[ Theorem 1.8.9]{CMR}):
\begin{teo}[Herglotz]\label{Herglotz}
 The Herglotz transform sets up a bijection between the analytic functions $H$ on $\mathbb{D}$ with $\Re H\geq0$ and $H(0)>0$ and the non-zero measures $\mu\in \mathscr{M}_\mathbb{T}$.
\end{teo}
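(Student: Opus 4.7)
The plan is to verify injectivity and surjectivity of $\mu\mapsto H_\mu$ separately. For injectivity, I would use the geometric expansion $\frac{\zeta+z}{\zeta-z}=1+2\sum_{n\ge 1}(z/\zeta)^n$ valid for $|z|<1=|\zeta|$, which integrates term-by-term against $\mu$ to yield
\begin{equation*}
H_\mu(z)=\mu(\mathbb{T})+2\sum_{n\ge 1}\hat\mu(n)\,z^n,\qquad \hat\mu(n):=\int_{\mathbb{T}}\bar\zeta^n\,d\mu(\zeta).
\end{equation*}
The Taylor coefficients of $H_\mu$ at the origin thus recover all Fourier coefficients $\hat\mu(n)$, $n\ge 0$, and because $\mu$ is a positive (in particular real) measure we automatically have $\hat\mu(-n)=\overline{\hat\mu(n)}$; since the trigonometric polynomials are dense in $C(\mathbb{T})$ by Stone--Weierstrass, $\mu$ is uniquely determined by $H_\mu$. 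The image of the map lies in the stated class because the Poisson-kernel identity $\Re\bigl(\frac{\zeta+z}{\zeta-z}\bigr)=\frac{1-|z|^2}{|\zeta-z|^2}$ forces $\Re H_\mu\ge 0$, while $H_\mu(0)=\mu(\mathbb{T})>0$ whenever $\mu$ is nonzero.

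For surjectivity, let $H$ be analytic on $\mathbb{D}$ with $u:=\Re H\ge 0$ and $c:=H(0)>0$. The strategy is to produce a positive finite measure $\mu$ on $\mathbb{T}$ whose Poisson integral equals $u$; then $H-H_\mu$ is analytic with vanishing real part, so $H-H_\mu\equiv i\alpha$ for some $\alpha\in\mathbb{R}$, and since both $H(0)=c$ (by hypothesis) and $H_\mu(0)=\mu(\mathbb{T})$ are real, evaluating at $0$ forces $\alpha=0$ and $\mu(\mathbb{T})=c$, so $H=H_\mu$. To construct $\mu$, for each $r\in(0,1)$ I define the positive finite measure $d\mu_r(\theta):=\frac{1}{2\pi}u(re^{i\theta})\,d\theta$ on $\mathbb{T}$; the harmonic mean-value property yields the uniform bound $\mu_r(\mathbb{T})=u(0)=c$ for all $r$.

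The decisive step---and the main technical obstacle---is a weak-$*$ compactness argument. By Banach--Alaoglu applied in $C(\mathbb{T})^*$ (equivalently, Helly's selection theorem) some sequence $r_n\uparrow 1$ gives $\mu_{r_n}\to\mu$ weakly-$*$ to a positive measure $\mu$ on $\mathbb{T}$ with $\mu(\mathbb{T})=c>0$, hence nonzero. To promote this to the Poisson representation, I would fix $z\in\mathbb{D}$ and apply the Poisson formula on the disc of radius $r_n>|z|$,
\begin{equation*}
u(z)=\int_{\mathbb{T}}\frac{r_n^2-|z|^2}{|r_n e^{i\theta}-z|^2}\,d\mu_{r_n}(\theta),
\end{equation*}
and pass to the limit: the kernels $P_{r_n}(z,\cdot)$ converge uniformly on $\mathbb{T}$ (for $z$ fixed) to $P(z,e^{i\theta})=\frac{1-|z|^2}{|e^{i\theta}-z|^2}$, which combined with the weak-$*$ convergence of $\mu_{r_n}$ yields $u(z)=\int_{\mathbb{T}}P(z,e^{i\theta})\,d\mu(\theta)=\Re H_\mu(z)$, closing the argument.
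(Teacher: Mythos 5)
Your proof is correct: the injectivity argument via the Taylor coefficients of $H_\mu$ recovering the moments of $\mu$ matches the remark the paper itself makes just before the statement, and the surjectivity argument (normal-family/Helly weak-$*$ limit of the measures $\frac{1}{2\pi}\Re H(re^{i\theta})\,d\theta$, followed by passing to the limit in the Poisson formula and killing the imaginary constant using $H(0)>0$) is the standard proof of the Herglotz representation theorem. The paper does not prove this statement at all — it simply cites \cite[Theorem 1.8.9]{CMR} — so there is nothing to compare beyond noting that your argument is the classical one found in that reference and is complete.
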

For $0< p<\infty$, let $H^p(\mathbb{D})$ be the space of analytic functions $f$ on $\mathbb{D}$ such that
\begin{align*}
\sup_{0<r<1}\int_{\mathbb{T}}|f(r\zeta)|^pd\zeta<\infty.
\end{align*}
For $p = \infty$, let $ H^\infty(\mathbb{D})$ denote the Hardy space consisting of all bounded analytic functions on $\mathbb{D}$ with the sup-norm.
Let $L^p(\mathbb{T})$ denote the Lebesgue spaces on the circle $\mathbb{T}$ with respect to the normalized Lebesgue measure. The following result proves the existence of a boundary function for all $f\in H^p(\mathbb{D})$ (see \cite[Theorem 1.9.4]{CMR}).
\begin{teo}[\cite{CMR}]
Let $0< p\leq\infty$ and $f\in H^p(\mathbb{D})$, the boundary function $\tilde{f}(\zeta)$ exists for $m$-almost all $\zeta$ in $\mathbb{T}$ and belongs to $L^p(\mathbb{T})$. Furthermore, the norms of $f$ in $H^p(\mathbb{D})$ and of $\tilde{f}(\zeta)$ in $L^p(\mathbb{T})$ coincide.
\end{teo}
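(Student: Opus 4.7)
The plan is to reduce the statement to classical Fatou-type convergence of Poisson integrals, with separate arguments according to whether $p\geq 1$ or $0<p<1$. The starting observation is that for any $f\in H^p(\mathbb{D})$ the function $|f|^p$ is subharmonic on $\mathbb{D}$, so the integral means
\[
M_p(r,f)=\left(\int_{\mathbb{T}}|f(r\zeta)|^p\,dm(\zeta)\right)^{1/p}
\]
are non-decreasing in $r\in(0,1)$, and their limit as $r\to 1^-$ equals $\|f\|_{H^p}$. This alone does not produce a boundary function, but it supplies the uniform bound needed to extract one.

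For $1<p\leq\infty$, I would use weak (respectively weak-$*$, when $p=\infty$) compactness of balls in $L^p(\mathbb{T})$ to extract a weak limit $\tilde f\in L^p(\mathbb{T})$ of a subsequence of the dilates $\zeta\mapsto f(r\zeta)$ as $r\to 1^-$. Passing to the limit in the Poisson representation of $f$ inside $\mathbb{D}$ identifies $f$ as the Poisson integral of $\tilde f$; the classical theorem of Fatou, which rests on the weak-type $(1,1)$ bound for the Hardy--Littlewood maximal operator together with Lebesgue differentiation, then yields radial (in fact non-tangential) convergence of this Poisson integral back to $\tilde f$ at $m$-almost every point of $\mathbb{T}$.

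The cases $0<p\leq 1$ are more delicate. For $p=1$, the weak-$*$ limit of the measures $f(r\,\cdot\,)\,dm$ is \emph{a priori} only a finite complex Borel measure on $\mathbb{T}$; the F.~and M.~Riesz theorem, which uses the vanishing of negative Fourier coefficients forced by analyticity of $f$, promotes it to an $L^1$-density. For $0<p<1$ the weak-compactness approach is unavailable and I would instead invoke the canonical Blaschke factorization $f=B\,g$, where $B$ is the Blaschke product built from the zeros of $f$ (whose convergence follows from Jensen's formula combined with the $H^p$-bound) and $g$ is zero-free on $\mathbb{D}$. Then $g^{p/2}$ is a well-defined element of $H^2$, to which the argument of the previous paragraph applies; since $|B|$ admits boundary values equal to $1$ at $m$-almost every point of $\mathbb{T}$, boundary existence and $L^p$-membership for $f$ itself follow.

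To conclude the norm identity, Fatou's lemma gives $\|\tilde f\|_{L^p}\leq\liminf_{r\to 1^-}\|f(r\,\cdot\,)\|_{L^p}=\|f\|_{H^p}$, while the reverse inequality is a consequence of the Poisson representation together with Jensen's inequality applied to the convex function $|\cdot|^p$ when $p\geq 1$ (the Blaschke factorization handling $p<1$ after exponent-rescaling). I expect the main obstacle to be the $p=1$ step: turning a boundary measure into an honest boundary function is precisely where the specifically analytic, rather than merely harmonic, nature of $f$ must be used, and the content of F.~and M.~Riesz is essentially the whole difficulty.
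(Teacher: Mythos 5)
The paper does not prove this statement at all: it is quoted as Theorem 1.9.4 of the cited monograph \cite{CMR}, so there is no in-paper argument to measure your proposal against. Judged on its own, your outline is the classical textbook proof and is essentially correct: subharmonicity of $|f|^p$ and monotonicity of the integral means; weak (resp.\ weak-$*$) compactness plus Fatou's theorem on Poisson integrals for $1<p\leq\infty$; the F.\ and M.\ Riesz theorem for $p=1$; and the Riesz--Blaschke factorization $f=Bg$ with $g$ zero-free and $g^{p/2}\in H^2(\mathbb{D})$ for $0<p<1$. Your two-sided argument for the norm identity (Fatou's lemma for $\|\tilde f\|_{L^p}\leq\|f\|_{H^p}$, and $|f|^p=|P[\tilde f]|^p\leq P[|\tilde f|^p]$ via Jensen for the reverse) is complete for $1\leq p<\infty$; for $p=\infty$ Jensen should be replaced by the maximum principle applied to the Poisson representation. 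The only places where the sketch leans on facts that are themselves nontrivial are in the range $0<p\leq 1$: the identity $\|g\|_{H^p}=\|f\|_{H^p}$ after dividing out the Blaschke product (F.\ Riesz's theorem, proved by dividing by finite partial products and applying Fatou's lemma) and the claim that $|B|=1$ at $m$-almost every point of $\mathbb{T}$, which rests on the estimate $\int_{\mathbb{T}}|B(r\zeta)|\,dm(\zeta)\to 1$; both should be cited or proved rather than asserted. It is worth noting that the two instances actually used in this paper are exactly $0<p<1$ (applied to $H_\mu$, to get existence of $\widetilde H_\mu$ $m$-a.e.) and $p=\infty$ (applied to $K(t,\cdot)$ to conclude $\kappa_t\in L^\infty(\mathbb{T})$), so the delicate small-$p$ case you single out is the one that matters here.
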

We know (see e.g.  \cite[Lemma 2.1.11 ]{CMR}) that $H_\mu\in H^p(\mathbb{D})$ for all $0<p<1$, then $\widetilde{H}_\mu(\zeta)$ exists for $m$-almost all $\zeta$ in $\mathbb{T}$. 
The density of $\mu$ can be recovered then from the boundary values of $\Re H_\mu$ 
by Fatou's theorem (\cite[Theorem 1.8.6]{CMR}) since  $\Re \widetilde{H}_\mu=d\mu/dm$ $m$-a.e.
Note that the atoms of $\mu\in \mathscr{M}_\mathbb{T}$ can also be recovered from $H_\mu$ by Lebesgue’s dominated convergence theorem, 
via
 \begin{equation*}
\lim_{r\rightarrow1^-} (1-r)H_{\mu}(r\zeta)=2\mu\{\zeta\}\quad {\rm for\ all}\ \zeta\in\mathbb{T}.
 \end{equation*}
\subsection{Spectral distribution of the free unitary Brownian motion}
For $\mu\in \mathscr{M}_\mathbb{T}$, let $\psi_\mu$ denote its moments generating function and $\chi_\mu$ the function $\frac{\psi_\mu}{1+\psi_\mu}$. If $\mu$ has nonzero mean, we denote by $\chi_\mu^{-1}$ the inverse function of $\chi_\mu$ in some neighborhood of zero. In this case the $\Sigma$-transform of $\mu$ is defined by  $\Sigma_\mu(z)=\frac{1}{z}\chi_\mu^{-1}(z)$.
The distribution $\lambda_t$ of the free unitary Brownian motion was introduced by Biane in \cite{Biane} as the unique probability measure on $\mathbb{T}$ such that its $\Sigma$-transform is given by
\begin{align*}
\Sigma_{\lambda_t} (z)=\exp\left(\frac{t}{2}\frac{1+z}{1-z}\right).
\end{align*}
This measure $ \lambda_t$ is known as the multiplicative analogues of semicircular distributions. Its moments
follow from the large-size asymptotic of observables of the free Brownian motion (of dimension $d$) $(U_{t}^{(d)})_{t\geq0}$ on the unitary group $\mathscr{U}(d)$ as follows.
\begin{align*}
\lim_{d\rightarrow \infty}\frac{1}{d}\mathbb{E}\left( \textrm{tr}[U_{t/d}^{(d)}]^{k}\right)=\int_{\mathbb{T}}\zeta^kd\lambda_t(\zeta),\quad k\geq0.
\end{align*}
This result was proved independently by Biane and Rains in \cite{Biane, Rains}  where these moments are explicitly calculated:
\begin{align}\label{mom}
\tau\big(U_t^k\big)=e^{-kt/2}\sum_{j=0}^{k-1}\frac{(-t)^j}{j!}\binom{k}{j+1}k^{j-1},\quad k\geq0.
\end{align}
The equality \eqref{mom} can be transformed into the PDE
\begin{equation}\label{standardpde}
\partial_tH+zH\partial_zH= 0,
\end{equation}
with the initial condition $H(0,z)=(1+z)/(1-z)$ for the Herglotz transform $H_{\lambda_{2t}}(z)$ 
(see e.g. the proof of \cite[Proposition 3.3]{Izu-Ued}).
 The measure $\lambda_t$ is described in \cite{Biane1} from the boundary behaviour of the inverse function of  $H_{\lambda_{t}}(z)$ as follows.
\begin{teo}[\cite{Biane1}] \label{bian}
For every $t > 0$, $\lambda_t$ has a continuous density $\rho_t$ with respect to the normalized Lebesgue
measure on $\mathbb{T}$. Its support is the connected arc $\{e^{i\theta}: |\theta|\leq g(t)\}$ with
\begin{align*}
g(t):= \frac{1}{2}\sqrt{t(4-t)}+\arccos\big(1-\frac{t}{2}\big) 
\end{align*}
for $t\in[0, 4]$, and the whole circle for $t > 4$. The density $\rho_t$ is determined by $\Re h_t(e^{i\theta})$  where $z = h_t(e^{i\theta})$  is the unique solution (with positive real part) to
\begin{align*}
\frac{z-1}{z+1}e^{\frac{t}{2}z}=e^{i\theta}.
\end{align*}
\end{teo}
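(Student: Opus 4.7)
The plan is to first identify $\mu\star\mu$ for each of the four independences and then verify each identity. A direct computation from $\psi_\mu(z)=z^2/(1-z^2)$ gives $\chi_\mu(z)=z^2$, whence: $\mu*\mu=\mu$, because classical multiplicative convolution on $\mathbb{T}$ multiplies Fourier coefficients and $\widehat\mu(k)^2=\widehat\mu(k)$; $\mu\bmultg\mu$ is the uniform measure on the cube roots of unity via the boolean multiplicative rule $\chi_{\mu\bmultg\mu}(z)=\chi_\mu(z)^2/z=z^3$; $\mu\triangleright\mu$ is the uniform measure on the fourth roots of unity via the monotone composition rule $\chi_{\mu\triangleright\mu}(z)=\chi_\mu(\chi_\mu(z))=z^4$; and $\mu\boxtimes\mu$ is the Haar measure $m$ on $\mathbb{T}$, since two free trace-zero symmetries $R,S$ form an alternating centered pair and therefore satisfy $\tau((RS)^k)=0$ for every $k\geq 1$. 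Part (1) is then immediate from the absorbing property of $m$ for $\boxtimes$: if $W$ is Haar distributed and $Y$ is free from $W$, then $WY$ is again Haar, so $m\boxtimes\lambda_t=m$.

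Parts (2)--(4) reduce to a single unified claim: writing $\eta_n$ for the uniform measure on the $n$-th roots of unity, the push-forward of $\eta_n\boxtimes\lambda_t$ by $z\mapsto z^n$ equals $\lambda_{nt}$ for $n=2,3,4$. I would prove this via the subordination for free multiplicative convolution. Let $\sigma=\eta_n\boxtimes\lambda_t$ with subordination functions $\omega_1,\omega_2$ satisfying $\chi_\sigma(z)=\chi_{\eta_n}(\omega_1(z))=\chi_{\lambda_t}(\omega_2(z))$ and $\omega_1(z)\,\omega_2(z)=z\,\chi_\sigma(z)$. Substituting $\chi_{\eta_n}(w)=w^n$ yields $\chi_\sigma(z)=\omega_1(z)^n$ and hence $\omega_2(z)=z\,\omega_1(z)^{n-1}$. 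Setting $y=\omega_1(z)$ and $w=y^n=\chi_\sigma(z)$, one reads $\omega_2(z)=\chi_{\lambda_t}^{-1}(w)=w\,\Sigma_{\lambda_t}(w)$, so $z=w\,\Sigma_{\lambda_t}(w)/y^{n-1}=y\,\Sigma_{\lambda_t}(w)$ and therefore
\[
z^n=w\,\Sigma_{\lambda_t}(w)^n=w\,\exp\!\left(\tfrac{nt}{2}\tfrac{1+w}{1-w}\right)=\chi_{\lambda_{nt}}^{-1}(w).
\]
Equivalently, $\chi_\sigma(z)=\chi_{\lambda_{nt}}(z^n)$ and thus $\psi_\sigma(z)=\psi_{\lambda_{nt}}(z^n)$; matching Taylor coefficients, $\tau(V^k)=0$ whenever $n\nmid k$ and $\tau(V^{nm})=\tau(U_{nt}^m)$ for all $m$, where $V\sim\sigma$. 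This is precisely the statement that $V^n\sim\lambda_{nt}$.

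The main obstacle is the rigor of the subordination/$\Sigma$-transform manipulation, since $\eta_n$ has zero mean for $n\geq 2$ and both $\chi_{\eta_n}(z)=z^n$ and $\chi_\sigma$ vanish at the origin to order $n$, so their inverses are multi-valued. I would handle this by working in the formal power-series ring, where $\chi_{\eta_n}(z)=z^n$ poses no problem and the subordination equations remain well-posed as fixed-point identities; the branch $y=w^{1/n}$ is the one dictated by continuity at the origin. Alternatively, one can bypass transforms altogether and verify the two moment identities directly: realising $\eta_n\boxtimes\lambda_t$ as the law of $V=XU_t$ with $X\sim\eta_n$ free from $U_t$, the vanishing $\tau((XU_t)^k)=0$ for $n\nmid k$ follows from the freeness formula combined with $\tau(X^j)=\mathbf{1}_{n\mid j}$, while the equality $\tau((XU_t)^{nm})=\tau(U_{nt}^m)$ uses the decomposition $(XU_t)^n=W_{n-1}W_{n-2}\cdots W_0$ with $W_j=X^{-j}U_tX^j$ (valid since $X^n=I$) followed by a combinatorial matching of the resulting cyclic product of conjugated copies of $U_t$ with $\lambda_t^{\boxtimes n}=\lambda_{nt}$.
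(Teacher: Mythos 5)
Your proposal does not address the statement at hand. The statement is Biane's description of the spectral distribution $\lambda_t$ of the free unitary Brownian motion: the existence of a continuous density $\rho_t$, the identification of its support as the arc $\{e^{i\theta}:|\theta|\leq g(t)\}$ with $g(t)=\frac{1}{2}\sqrt{t(4-t)}+\arccos\big(1-\frac{t}{2}\big)$ for $t\in[0,4]$ (and the full circle for $t>4$), and the determination of $\rho_t$ from $\Re h_t(e^{i\theta})$, where $h_t(e^{i\theta})$ is the unique solution with positive real part of $\frac{z-1}{z+1}e^{\frac{t}{2}z}=e^{i\theta}$. What you have written is instead an argument for Theorem \ref{convol} (the four identities for $(\mu\boxtimes\mu)\boxtimes\lambda_t$, $(\mu*\mu)\boxtimes\lambda_t$, and so on); none of its steps bears on the continuity of $\rho_t$, on the formula for $g(t)$, or on the functional equation characterizing $h_t$. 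Even granting every computation in your text, you would not have established any of the three assertions of Theorem \ref{bian}.

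A proof of the stated theorem has to go through the boundary analysis of the inverse of the $\chi$-transform (equivalently, of the Herglotz transform) of $\lambda_t$. From $\Sigma_{\lambda_t}(z)=\exp\big(\frac{t}{2}\frac{1+z}{1-z}\big)$ one gets the explicit inverse $\chi_{\lambda_t}^{-1}(w)=w\exp\big(\frac{t}{2}\frac{1+w}{1-w}\big)$; after the M\"obius substitution $w=\frac{z-1}{z+1}$ this becomes exactly the map $z\mapsto\frac{z-1}{z+1}e^{\frac{t}{2}z}$ appearing in the statement. One then shows that this map is a conformal bijection from a suitable domain in the right half-plane onto a neighbourhood of the arc carrying the measure, that its boundary values parametrize $\mathbb{T}$, that the density is recovered from $\Re h_t$ by Fatou's theorem on boundary values of the Herglotz transform, and that the edge $e^{ig(t)}$ of the support corresponds to the critical point of the map, which yields the stated formula for $g(t)$ by an explicit computation. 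None of this machinery appears in your proposal, so a proof of the actual statement must be constructed from scratch; the material you wrote belongs with Theorem \ref{convol}, not here.
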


 \section{Reminder and notations}
 We use here the same symbols as in \cite{Tar,Tarek}. 
To a given pair of projections $P,Q$ in $\mathscr{A}$ that are independent of $(U_t)_{t\geq0}$ we associate the symmetries $R=2P-I$ and $S=2Q-I$. Denote by $\alpha=\tau(R)$ and $\beta=\tau(S)$. We sometimes use the notations $a=|\alpha-\beta|/2$ and $b=|\alpha+\beta|/2$ for simplicity.
Keep the symbols $\mu_t$ and $\nu_t$ above. The unit circle is identified with $(-\pi,\pi]$ by $e^{i\theta}$. According to \cite[Theorem 4.3]{Tar}, the measure $\nu_t$ is  connected to $\mu_t$ by the following formula
 \begin{equation*}
\nu_t=2\hat{\mu}_t-\frac{2-\alpha-\beta}{2}\delta_{\pi}-\frac{\alpha+\beta}{2}\delta_0,
\end{equation*}
where 
 \begin{equation}\label{symm}
\hat{\mu}_t:=\frac{1}{2}\left(\tilde{\mu}_t+\left(\tilde{\mu}_t|_{(0,\pi)}\right)\circ j^{-1}\right)
\end{equation}
 is the symmetrization on $(-\pi,\pi)$, with the mapping $j:\theta\in(0,\pi)\mapsto-\theta\in(-\pi,0)$, of the positive measure $\tilde{\mu}_t(d\theta)$ on $[0,\pi]$ obtained from $\mu_t(dx)$ via the variable change $x=\cos^2(\theta/2)$.
 Equivalently, we obtain the following relationship between the Herglotz transforms $H_{\mu_t}$ and $H_{\nu_t}$ (see \cite[Corollary 4.2]{Tar}).
\begin{equation}\label{relationship}
H_{\nu_t}(z)=\frac{z-1}{z+1}H_{\mu_t}\left(\frac{4z}{(1+z)^2}\right)-2(\alpha+\beta)\frac{z}{z^2-1}.
\end{equation}
The function $H_{\nu_t}(z)$, which we shall denote by $H(t,z)$, is analytic in both variables $z\in\mathbb{D}$ and $t>0$ (see  \cite[Theorem 1.4]{Col-Kem}) and solves the PDE (see \cite[Proposition 2.3]{Tar})
 \begin{equation}\label{pde}
\partial_tH+zH\partial_zH= \frac{2 z \left(\alpha z^2+2 \beta z+\alpha\right) \left(\beta z^2+2  \alpha z+\beta\right)}{\left(1-z^2\right)^3}.
\end{equation}
Let
\begin{equation}\label{defK}
K(t,z):=\sqrt{H(t,z)^2-\left(a\frac{1-z}{1+z}+b\frac{1+z}{1-z} \right)^2}.
\end{equation}
The PDE \eqref{pde} is then transformed into
\begin{align*}
\partial_tK+zH(t,z)\partial_zK=0.
\end{align*}
Note that steady state solution $K(\infty,z)$ is the constant $\sqrt{1-(a+b)^2}$ (see \cite[Remark 3.3]{Tar}). 
The ordinary differential equations (ODEs for short) of characteristic curve associated with this PDE are as follows.
\begin{align}\label{characteristic}
\begin{cases}
\partial_t\phi_{t}(z) = \phi_{t}(z) H(t,\phi_{t}(z)), \quad \phi_{0}(z) = z,\\
\partial_t \left[K(t,\phi_{t}(z))\right] =0
\end{cases}
\end{align}
The second ODE of \eqref{characteristic} implies that $K(t,\phi_{t}(z))=K(0,z)$, while the first one is nothing else but the radial Loewner ODE  (see  \cite[Theorem 4.14]{Law}) which defines a unique family of conformal transformations $\phi_t$ from some region $\Omega_{t}\subset\mathbb{D}$ onto $\mathbb{D}$ with $\phi_t(0)=0$ and $\partial_z\phi_t(0)=e^t$. Moreover, from \cite[Remark 4.15]{Law}, $\phi_t$ is invertible from $\Omega_{t}$ onto $\mathbb{D}$ and it has a continuous extension to $\mathbb{T}\cap \overline{\Omega_{t}}$ by \cite[Proposition 2.1]{Tarek}.
Integrating the first ODE in \eqref{characteristic}, we get
\begin{align*}
\phi_t(z)=z\exp\left(\int_0^tH(s,\phi_s(z))ds\right).
\end{align*}
Let us define
\begin{align*}
h_t(r,\theta)=1-\int_0^t\frac{1-|\phi_s(re^{i\theta})|^2}{-\ln r}\int_{\mathbb{T}}\frac{1}{|\xi-\phi_s(re^{i\theta})|^2}d\nu_s(\xi)ds,
\end{align*}
so that
\begin{align}\label{logmod}
\ln \left|\phi_t(re^{i\theta})\right|=\ln r+\Re\int_0^tH(s,\phi_s(re^{i\theta}))ds=(\ln r) h_t(r,\theta).
\end{align}
Define $R_t:[-\pi,\pi]\rightarrow [0,1]$ as follows
\begin{align*}
R_t(\theta)&=\sup\left\{r\in(0,1):h_t(r,\theta)>0\right\},
\end{align*}
and let
\begin{align*}
I_{t}&=\left\{\theta\in[-\pi,\pi]:h_t(\theta)<0\right\}
\end{align*}
where $h_t(\theta)=\lim_{r\rightarrow 1^-}h_t(r,\theta)\in\mathbb{R}\cup\{-\infty\}$ (see the fact exposed under Lemma 3.2 in \cite{Tarek}).
The next result, giving a description of $\Omega_{t}$  and its boundary, was proved in \cite[Proposition 3.3]{Tarek}. 
\begin{pro}[\cite{Tarek}]
For any $t>0$, we have
\begin{enumerate}
\item $\Omega_{t}=\{re^{i\theta}: h_t(r,e^{i\theta})>0\}$ 
\item $\partial\Omega_{t}\cap\mathbb{D}=\{re^{i\theta}: h_t(r,e^{i\theta})=0\ {\rm and}\ \theta\in I_{t}\}$.
\item $\partial\Omega_{t}\cap\mathbb{T}=\{e^{i\theta}: h_t(r,e^{i\theta})=0\ {\rm and}\ \theta\in [-\pi,\pi]\setminus I_{t}\}$.
\end{enumerate}
\end{pro}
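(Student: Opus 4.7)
The proof will rest entirely on the identity \eqref{logmod}, namely
\[
\ln|\phi_t(re^{i\theta})|=(\ln r)\,h_t(r,\theta),
\]
combined with the Loewner description of $\phi_t$ as the flow of the ODE \eqref{characteristic}. The first key observation I would record is that, since $\ln r<0$ for $r\in(0,1)$, the condition $|\phi_s(re^{i\theta})|<1$ is equivalent to $h_s(r,\theta)>0$. The second observation is that $s\mapsto h_s(r,\theta)$ is \emph{strictly decreasing} in $s$: differentiating its definition gives
\[
\partial_s h_s(r,\theta)=-\frac{1-|\phi_s(re^{i\theta})|^2}{-\ln r}\int_{\mathbb{T}}\frac{d\nu_s(\xi)}{|\xi-\phi_s(re^{i\theta})|^2},
\]
which is strictly negative as long as the trajectory lies in $\mathbb{D}$, while $h_0(r,\theta)=1>0$.

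For part (1), by general Loewner theory the point $z=re^{i\theta}$ belongs to $\Omega_t$ iff the characteristic $s\mapsto\phi_s(z)$ remains in $\mathbb{D}$ for every $s\in[0,t]$, i.e.\ iff $h_s(r,\theta)>0$ for all $s\in[0,t]$. By the monotonicity step above this reduces to the single inequality $h_t(r,\theta)>0$, which is exactly the claim.

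For parts (2) and (3), part (1) together with continuity of $h_t(r,\theta)$ in $(r,\theta)$ shows that the full boundary $\partial\Omega_t\cap\overline{\mathbb{D}}$ is contained in the zero set $\{h_t(r,\theta)=0\}\cup\{r=1,\ h_t(\theta)\geq 0\}$. I would split this zero set according to whether the vanishing happens at some $r<1$ (interior boundary) or only in the limit $r\to 1^-$ (boundary on $\mathbb{T}$). Fixing $\theta$, the monotonicity of $r\mapsto h_t(r,\theta)$ which one reads off \eqref{logmod} together with the maximum principle (applied to the subharmonic function $\log|\phi_t|$ on $\Omega_t$) shows that the radial slice $\Omega_t\cap\{\arg z=\theta\}$ is an interval $\{re^{i\theta}:0<r<R_t(\theta)\}$. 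Thus $R_t(\theta)<1$ precisely when $\lim_{r\to 1^-}h_t(r,\theta)=h_t(\theta)<0$, i.e.\ $\theta\in I_t$; in that case $R_t(\theta)e^{i\theta}$ is a boundary point in $\mathbb{D}$ characterized by $h_t(R_t(\theta),\theta)=0$, which yields (2). Conversely, for $\theta\notin I_t$ one has $R_t(\theta)=1$ and by monotonicity the limit $h_t(\theta)=0$ is necessarily attained on approach to $\mathbb{T}$, so the ray terminates on the unit circle at $e^{i\theta}$ with $h_t(\theta)=0$, giving (3).

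The only genuinely subtle point I anticipate is the passage from the open condition $h_t(r,\theta)>0$ with $r<1$ to the boundary behaviour as $r\to 1^-$. In particular, one must justify that $h_t(\theta)=\lim_{r\to 1^-}h_t(r,\theta)$ exists in $\mathbb{R}\cup\{-\infty\}$ and that for $\theta\notin I_t$ the limit really equals zero rather than being strictly positive; this is exactly the content of the remark preceding the statement and relies on the Poisson integral representation of $\Re H(s,\cdot)$ together with Fatou's lemma controlling the boundary integrand $|\xi-\phi_s(re^{i\theta})|^{-2}$ as $r\uparrow 1$. The rest of the argument is conformal-mapping bookkeeping.
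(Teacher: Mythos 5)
A preliminary remark on the comparison itself: the paper contains no proof of this proposition. It is imported verbatim from \cite[Proposition 3.3]{Tarek} (the sentence immediately preceding the statement says so), so there is no in-paper argument to measure yours against; I can only assess your sketch on its own terms.

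Your part (1) is sound and is surely close to what the reference does: from the definition of $h_t$ one reads off $\partial_s h_s(r,\theta)=-\frac{1-|\phi_s(re^{i\theta})|^2}{-\ln r}\int_{\mathbb{T}}|\xi-\phi_s(re^{i\theta})|^{-2}\,d\nu_s(\xi)\le 0$, so $s\mapsto h_s(r,\theta)$ decreases from $h_0=1$; since $\ln r<0$, the identity \eqref{logmod} converts the Loewner exit-time description of $\Omega_t$ (the flow survives in $\mathbb{D}$ up to time $t$) into the single inequality $h_t(r,\theta)>0$. That argument is complete modulo routine bookkeeping about the domain of definition of the flow.

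The gap is in (2)--(3). You reduce both to the claim that each radial slice $\Omega_t\cap\{\arg z=\theta\}$ is an interval $(0,R_t(\theta))$, and you justify this by ``the monotonicity of $r\mapsto h_t(r,\theta)$ which one reads off \eqref{logmod} together with the maximum principle applied to the subharmonic function $\log|\phi_t|$.'' Neither ingredient delivers this. Equation \eqref{logmod} gives no monotonicity in $r$: as $r$ increases the factor $1/(-\ln r)$ increases while the time integral varies through $\phi_s(re^{i\theta})$ in an uncontrolled way. And $\log|\phi_t|$ is in fact harmonic on $\Omega_t\setminus\{0\}$, so the maximum principle bounds it by its boundary values but says nothing about its behaviour along a fixed ray. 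For a general radial Loewner hull (e.g.\ a spiral slit) a ray from the origin can leave and re-enter the complement, so the starlikeness you need is a genuine property of this particular flow that must be proved; it is exactly the hard content of \cite[Proposition 3.3]{Tarek} and of the analogous Lemmas 3.2 and 3.6 of \cite{Col-Kem}. Without it, a boundary point $r_0e^{i\theta_0}\in\mathbb{D}$ does not force $h_t(\theta_0)<0$ (i.e.\ $\theta_0\in I_t$), and $\theta\notin I_t$ does not rule out interior zeros of $h_t(\cdot,\theta)$ --- so both inclusions in (2) and (3) remain open. The passage from $h_t(r,\theta)$ to the radial limit $h_t(\theta)$, which you yourself flag as the subtle point, is a second place where an actual argument (not just a reference to Fatou's lemma) is required and is not supplied.
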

In closing, we recall the following result which will be of use later on (see the proof of Theorem 1.1 in \cite{Tarek}).
\begin{lem}[\cite{Tarek}]\label{ext}
For every $t>0$, the function $K(t,.)$ has a continuous extension to the unit circle $\mathbb{T}$.
\end{lem}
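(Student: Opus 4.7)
The approach is via the method of characteristics for the PDE $\partial_t K + z H(t,z) \partial_z K = 0$ that $K$ satisfies. The second ODE of \eqref{characteristic} directly gives the conservation law $K(t, \phi_t(z)) = K(0, z)$, and since $\phi_t : \Omega_t \to \mathbb{D}$ is a conformal bijection, inversion yields
\begin{equation*}
K(t, w) = K(0, \phi_t^{-1}(w)), \qquad w \in \mathbb{D}.
\end{equation*}
Continuity of $K(t, \cdot)$ up to $\mathbb{T}$ then reduces to composing continuous boundary extensions of the two factors $\phi_t^{-1}$ and $K(0, \cdot)$.

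For the extension of $\phi_t^{-1}$, I would combine the continuous extension of $\phi_t$ to $\mathbb{T} \cap \overline{\Omega_t}$ (from \cite[Remark 4.15]{Law} and \cite[Proposition 2.1]{Tarek}) with the description in Proposition 3.1 of $\partial \Omega_t$ as an inner arc in $\mathbb{D}$ (where $h_t = 0$ and $\theta \in I_t$) together with an outer arc on $\mathbb{T}$ (where $\theta \not\in I_t$). Those two arcs join at the endpoints of $I_t$ to form a Jordan curve, and Carath\'eodory's extension theorem then provides the desired homeomorphism $\phi_t^{-1}: \overline{\mathbb{D}} \to \overline{\Omega_t}$.

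For the extension of $K(0, \cdot)$ to $\overline{\Omega_t}$, the key observation is the factorization
\begin{equation*}
K(0, z)^2 = \bigl(H(0, z) - \Lambda(z)\bigr)\bigl(H(0, z) + \Lambda(z)\bigr), \qquad \Lambda(z) = a\frac{1-z}{1+z} + b\frac{1+z}{1-z}.
\end{equation*}
The polar terms in $\Lambda$ at $\pm 1$ are engineered to absorb the atomic contribution of $H(0, z) = H_{\nu_0}(z)$ coming from the kernels of $R \mp S$, so that $K(0, z)^2$ extends continuously to $\overline{\Omega_t}$. A principal branch of the square root, pinned down at $z = 0$ by the steady-state identity $K(t, 0) = K(0, 0) = \sqrt{1 - (a+b)^2}$ (using $\phi_t(0) = 0$ and $H(0, 0) = 1$), then extends $K(0, \cdot)$ continuously.

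The main obstacle I expect is the consistent selection of the branch of the square root along $\mathbb{T}$: since $K(0, z)^2$ may touch zero at edge points of the support of the non-atomic part of $\nu_0$, the square root remains continuous there but could in principle change sign. Following the transport along characteristics from the interior (where the sign is unambiguously positive at $z = 0$) out to the boundary, one verifies that a single branch of $K(0, \cdot)$ extends continuously across $\mathbb{T}$, which combined with the continuous extension of $\phi_t^{-1}$ gives the claim.
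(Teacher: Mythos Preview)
The paper does not actually prove this lemma; it is quoted verbatim from \cite{Tarek} (see the sentence preceding the statement), so there is no in-paper argument to compare against. Your starting identity $K(t,w)=K(0,\phi_t^{-1}(w))$ is correct and is precisely what the paper itself invokes in Section~5.2, so the overall strategy is the intended one.

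That said, your sketch has a genuine gap at the step ``$K(0,\cdot)$ extends continuously to $\overline{\Omega_t}$.'' Your justification only treats the poles of $\Lambda$ at $z=\pm1$, i.e.\ the atoms of $\nu_0$ at $\pm1$. But $\nu_0$ is the spectral measure of $RS$ for \emph{arbitrary} symmetries $R,S$: away from $\pm1$ it may have a singular continuous part or an unbounded density, and then $H(0,\cdot)=H_{\nu_0}$ has no continuous boundary values on those arcs, nor does $K(0,\cdot)^2=H(0,\cdot)^2-\Lambda(\cdot)^2$. What is missing is the statement that every such ``bad'' boundary point of $\nu_0$ is absorbed by the Loewner hull for $t>0$, i.e.\ that $\partial\Omega_t\cap\mathbb{T}$ sits in the complement of $\operatorname{supp}\nu_0$ (equivalently, that such points lie in $I_t$). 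This is exactly the nontrivial analytic input that \cite{Tarek} supplies, and without it the composition $K(0,\cdot)\circ\phi_t^{-1}$ need not be continuous on $\mathbb{T}$.

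A secondary issue: your appeal to Carath\'eodory for the extension of $\phi_t^{-1}$ to $\overline{\mathbb{D}}$ presupposes that $\partial\Omega_t$ is a Jordan curve. Proposition~3.1 only gives a set-theoretic decomposition of $\partial\Omega_t$ into an inner piece and an outer piece; it says nothing about local connectivity or how the two pieces meet. This also requires the finer analysis carried out in \cite{Tarek}.
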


     \section{Analysis of spectral distributions of $RU_tSU_t^*$}
     In this section, we shall prove the Theorem \ref{description mu}. To this end, we start by giving a description of the spectral measure $\nu_t$ of $RU_tSU_t^*$ for any  $t>0$, and deriving a formula for its density. 
We notice that from the asymptotic freeness of $R$ and $U_tSU_t^*$, the measure $\nu_t$ converges weakly as $t\rightarrow\infty$ (see \cite[Proposition 2.6]{Tar}) to
\begin{equation}\label{state}
\nu_\infty=a\delta_{\pi}+b\delta_0+\frac{\sqrt{-(\cos\theta-r_+)(\cos\theta-r_-)}}{2\pi|\sin\theta|}{\bf 1}_{(\theta_-,\theta_+)\cup(-\theta_+,-\theta_-)}d\theta
\end{equation}
with
$ r_{\pm}=-\alpha\beta\pm\sqrt{(1-\alpha^2)(1-\beta^2)}\quad {\rm and}\ \theta_{\pm}=\arccos r_{\pm}.
$
The following theorem asserts that an analogous result holds for finite $t$.
  \begin{teo}\label{descrip}
For every  $t>0$,  $\nu_t-a\delta_{\pi}-b\delta_0$  is absolutely continuous with respect to the normalized Lebesgue measure on $\mathbb{T}=(-\pi,\pi]$.
Moreover, its  density  $\kappa_t$ at the point $e^{i\theta}$ is equal to the real part of
\begin{equation*}
\sqrt{[K(t,e^{i\theta})]^2+(a+b)^2-1-\frac{(\cos\theta-r_+)(\cos\theta-r_-)}{\sin^2\theta}}.
\end{equation*}
\end{teo}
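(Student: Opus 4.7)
The plan is to extract the boundary density of $\nu_t$ from the Herglotz transform $H(t,z)=H_{\nu_t}(z)$ by exploiting the algebraic identity
$$H(t,z)^2 = K(t,z)^2 + \Bigl(a\frac{1-z}{1+z}+b\frac{1+z}{1-z}\Bigr)^2$$
built into the definition \eqref{defK} of $K$, together with Lemma \ref{ext}, which supplies a continuous extension of $K(t,\cdot)$ to the unit circle.

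First, for $\theta\in(-\pi,\pi)\setminus\{0\}$, I substitute $z=e^{i\theta}$ and use $(1-z)/(1+z)=-i\tan(\theta/2)$, $(1+z)/(1-z)=i\cot(\theta/2)$ to obtain
$$\Bigl(a\tfrac{1-z}{1+z}+b\tfrac{1+z}{1-z}\Bigr)^{2}\Big|_{z=e^{i\theta}}=-\bigl(b\cot(\theta/2)-a\tan(\theta/2)\bigr)^2.$$
A direct expansion, combined with $4a^2=(\alpha-\beta)^2$, $4b^2=(\alpha+\beta)^2$ and the Vieta relations $r_{+}+r_{-}=-2\alpha\beta$, $r_{+}r_{-}=\alpha^2+\beta^2-1$, rewrites this as
$$(a+b)^2-1-\frac{(\cos\theta-r_{+})(\cos\theta-r_{-})}{\sin^2\theta}.$$
By Lemma \ref{ext}, $H(t,e^{i\theta})^2$ then has a continuous boundary limit on $\mathbb{T}\setminus\{\pm 1\}$ equal to $K(t,e^{i\theta})^2$ plus this expression.

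Second, I fix the branch of the square root by the Herglotz constraint $\Re H>0$ in $\mathbb{D}$ and pass to the boundary. Fatou's theorem, as recalled in Section 2, identifies the Radon--Nikodym density of the absolutely continuous part of $\nu_t$ with $\Re\tilde H(t,e^{i\theta})$, which is precisely the stated $\kappa_t$. For the atoms, I analyze $H$ near $z=\pm 1$: as $z\to 1$ the bracketed term behaves like $4b^2/(1-z)^2$ while $K$ stays bounded, so the chosen branch gives $H(t,z)\sim 2b/(1-z)$ radially, and the atom formula $\lim_{r\to 1^-}(1-r)H(t,r)=2\nu_t\{1\}$ yields $\nu_t\{1\}=b$; an analogous computation at $z=-1$ yields $\nu_t\{-1\}=a$. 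Everywhere else on $\mathbb{T}$ the extension of $H$ is finite, ruling out further atoms. The apparent singularity of $(\cos\theta-r_{+})(\cos\theta-r_{-})/\sin^2\theta$ at $\theta\in\{0,\pi\}$ is reconciled with boundedness of $\kappa_t$ because those numerators specialize to $(\alpha+\beta)^2$ and $(\alpha-\beta)^2$, driving the radicand to $-\infty$ and forcing $\Re\sqrt{\,\cdot\,}=0$ at the endpoints.

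The principal obstacle is the branch-selection argument in the second step: one must check that the single-valued square root dictated by $\Re H>0$ inside the disc does not develop spurious sign flips as $z$ approaches $\mathbb{T}$, and that no singular continuous part of $\nu_t$ can escape the analysis. The continuity of $K$ on $\mathbb{T}$ away from $\pm 1$, the explicit rational structure of the subtracted term, and the $\theta\mapsto-\theta$ symmetry of $\nu_t$ inherited from \eqref{symm} together make this manageable; the remaining work is routine trigonometric algebra.
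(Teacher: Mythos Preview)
Your outline tracks the paper's argument closely---same algebraic identity, same trigonometric reduction, same appeal to Lemma~\ref{ext}---but the step you flag as the ``principal obstacle'' and then dismiss as routine is in fact the heart of the proof, and you have not actually carried it out. Knowing that $H(t,e^{i\theta})^2$ has a continuous boundary extension, together with Fatou's theorem, only identifies the density of the \emph{absolutely continuous part} of $\nu_t$; it does not by itself exclude a singular continuous component. To kill the singular part you need a genuine continuous extension of the Herglotz transform (not merely a.e.\ radial limits) on $\mathbb{T}\setminus\{\pm1\}$, and your branch-of-square-root discussion stops short of establishing that. The symmetry remark is irrelevant to this point.

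The paper resolves this cleanly by not working with $H$ at all on the boundary. Instead it subtracts the atomic contributions \emph{first}, setting
\[
L(t,z)=H(t,z)-a\frac{1-z}{1+z}-b\frac{1+z}{1-z},
\]
which is the Herglotz transform of $\nu_t-a\delta_\pi-b\delta_0$, and then uses the conjugate trick
\[
L(t,z)=\frac{K(t,z)^2}{H(t,z)+a\frac{1-z}{1+z}+b\frac{1+z}{1-z}}
=\frac{(1-z^2)K(t,z)^2}{\sqrt{[(1-z^2)K(t,z)]^2+[a(1-z)^2+b(1+z)^2]^2}+a(1-z)^2+b(1+z)^2}.
\]
After clearing the poles at $\pm1$ in this way, the denominator is manifestly nonvanishing on $\overline{\mathbb{D}}$ and the radicand is $(1-z^2)^2H(t,z)^2$, which never hits $(-\infty,0)$; so $L(t,\cdot)$ extends continuously to the closed disc. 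That single observation simultaneously (i) eliminates any singular part, (ii) makes the separate atom computation unnecessary, and (iii) sidesteps the branch issue entirely. Once absolute continuity is in hand, the density is $\Re L(t,e^{i\theta})$, and only then does the paper rewrite it as the square-root expression you derived. Your trigonometric identity for $\bigl(a\tan(\theta/2)-b\cot(\theta/2)\bigr)^2$ is exactly what the paper proves in its final paragraph, so that part is fine.
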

\begin{proof}
Define the function
\begin{align*}
L(t,z)&=\int_{\mathbb{T}}\frac{e^{i\theta}+z}{e^{i\theta}-z}(\nu_t-a\delta_\pi-b\delta_0)(d\theta)
\\&=H(t,z)-a\frac{1-z}{1+z}-b\frac{1+z}{1-z}.
\end{align*}
The real part of this function is nothing else but  the Poisson integral of the measure $\nu_t-a\delta_\pi-b\delta_0$. 
Using \eqref{defK} and multiplying by the conjugate, we get
\begin{align*}
L(t,z)
&= \frac{K(t,z)^2}{\sqrt{K(t,z)^2+\left(a\frac{1-z}{1+z}+b\frac{1+z}{1-z}\right)^2}+a\frac{1-z}{1+z}+b\frac{1+z}{1-z}}
\\&=\frac{(1-z^2)K(t,z)^2}{\sqrt{[(1-z^2)K(t,z)]^2+\left[a(1-z)^2+b(1+z)^2\right]^2}+a(1-z)^2+b(1+z)^2}.
\end{align*}
Note that  $K(t,z)$ extends continuously to $\mathbb{T}$ by Lemma \ref{ext}. 
The denominator of the above expression does not vanish on the closed unit disc and
\begin{align*}
z\mapsto (1-z^2)^2K(t,z)^2+\left[a(1-z)^2+b(1+z)^2\right]^2=(1-z^2)H(t,z)^2
\end{align*}
does not take negative values.
These  together imply that $L(t,z)$ has a continuous extension on the boundary $\mathbb{T}$.
Hence, by uniqueness of Herglotz representation (see Theorem \ref{Herglotz}), the measure $\nu_{t}-a\delta_\pi -b\delta_0$ is absolutely continuous with respect to the Haar measure in $\mathbb{T}$ and its density is given by:
\begin{align*}
\Re\left[H(t,e^{i\theta})-a\frac{1-e^{i\theta}}{1+e^{i\theta}}-b \frac{1+e^{i\theta}}{1-e^{i\theta}}\right]=&\Re\sqrt{[K(t,e^{i\theta})]^2+\left[  a\frac{1-e^{i\theta}}{1+e^{i\theta}}-b \frac{1+e^{i\theta}}{1-e^{i\theta}}\right]^2}
\\=&\Re\sqrt{[K(t,e^{i\theta})]^2-\left[a\tan(\theta/2)-b\cot(\theta/2)\right]^2}.
\end{align*}
To complete the proof, we need only show that
\begin{align*}
\left[a\tan(\theta/2)-b\cot(\theta/2)\right]^2=1-(a+b)^2+\frac{(\cos\theta-r_+)(\cos\theta-r_-)}{\sin^2\theta}
\end{align*}
or equivalently that
\begin{align*}
(1-a^2-b^2)\sin^2\theta-a^2\sin^2\theta\tan^2(\theta/2)-b^2\sin^2\theta\cot^2(\theta/2)=-(\cos\theta-r_+)(\cos\theta-r_-).
\end{align*}
Working from the left-hand side and using the identities
\begin{align*}
\sin^2\theta=1-\cos^2\theta,\quad \sin^2\theta\tan^2(\theta/2)=(1-\cos\theta)^2,\quad\sin^2\theta\cot^2(\theta/2)= (1+\cos\theta)^2,
\end{align*}
we get
\begin{align*}
(1-a^2-b^2)(1-\cos^2\theta)-a^2(1-\cos\theta)^2-b^2(1+\cos\theta)^2.
\end{align*}
Rearranging these terms, we obtain
\begin{align*}
-\cos^2\theta+2(a^2-b^2)\cos\theta-2(a^2+b^2)+1.
\end{align*}
So, by substituting the equalities $\alpha\beta=b^2-a^2$ and $\alpha^2+\beta^2=2(a^2+b^2)$, we obtain the required formula:
\begin{align*}
-\cos^2\theta-2\alpha\beta\cos\theta+1-\alpha^2-\beta^2=-(\cos\theta-r_+)(\cos\theta-r_-).
\end{align*}
\end{proof}

\begin{pro}
The support of $\nu_t$ is a subset of $\{\phi_t\left( R_t(\theta)e^{i\theta}\right): \theta\in I_t\}$.
\end{pro}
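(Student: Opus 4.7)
The plan is to show that $\nu_t$ places no mass on $\mathbb{T}\setminus S_t$, where $S_t:=\{\phi_t(R_t(\theta)e^{i\theta}):\theta\in I_t\}$, by exploiting the Loewner flow along $\partial\Omega_t$. The immediately preceding proposition partitions $\partial\Omega_t$ into its inner portion (parameterised by $I_t$, points of the form $R_t(\theta)e^{i\theta}\in\mathbb{D}$) and its outer portion on $\mathbb{T}$ (parameterised by $[-\pi,\pi]\setminus I_t$). Since $\phi_t:\Omega_t\to\mathbb{D}$ is a conformal homeomorphism extending continuously to $\overline{\Omega_t}$ and carrying $\partial\Omega_t$ onto $\mathbb{T}$, the set $S_t$ is exactly $\phi_t(\partial\Omega_t\cap\mathbb{D})$, while $\mathbb{T}\setminus S_t\subset\phi_t(\partial\Omega_t\cap\mathbb{T})$. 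Hence any $e^{i\theta_0}\in\mathbb{T}\setminus S_t$ admits a preimage $e^{i\psi_0}\in\partial\Omega_t\cap\mathbb{T}$ with $\psi_0\notin I_t$.

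For such a $\psi_0$, I would argue that the characteristic $s\mapsto\phi_s(e^{i\psi_0})$ remains on $\mathbb{T}$ throughout $[0,t]$. Because the domains $\Omega_s$ shrink with $s$, the sets $I_s$ are nondecreasing in $s$, so $\psi_0\notin I_s$ for every $s\in[0,t]$; combined with \eqref{logmod} and the proposition recalled above, this gives $|\phi_s(e^{i\psi_0})|\equiv 1$ on $[0,t]$. Differentiating this identity in $s$ and invoking the first ODE of \eqref{characteristic} yields
\begin{equation*}
0 \;=\; \partial_s \log|\phi_s(e^{i\psi_0})| \;=\; \Re H(s,\phi_s(e^{i\psi_0})), \qquad s\in(0,t],
\end{equation*}
where $H(s,\cdot)$ on $\mathbb{T}$ is understood through its Fatou boundary values. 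Evaluating at $s=t$ gives $\Re H(t,e^{i\theta_0})=0$. Since $a\tfrac{1-z}{1+z}+b\tfrac{1+z}{1-z}$ takes purely imaginary values on $\mathbb{T}\setminus\{\pm 1\}$, the density formula derived in the proof of Theorem \ref{descrip} forces $\kappa_t(\theta_0)=\Re H(t,e^{i\theta_0})=0$. Thus the absolutely continuous part of $\nu_t$ is supported in $\overline{S_t}$.

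It remains to place the atoms $b\delta_0$ and $a\delta_\pi$ of $\nu_t$ (when $b>0$ or $a>0$) inside $S_t$. Since $H(t,z)$ has a simple pole with residue $2b$ at $z=1$ and $2a$ at $z=-1$, the Loewner ODE $\partial_t\phi_t=\phi_t H(t,\phi_t)$ prevents the characteristic from staying on $\mathbb{T}$ at these points, forcing $\phi_t^{-1}(\pm 1)\in\partial\Omega_t\cap\mathbb{D}$, i.e., $\pm 1\in S_t$. The main technical obstacle will be the rigorous differentiation of the identity $|\phi_s(e^{i\psi_0})|\equiv 1$ in paragraph two: the characteristic ODE is naturally posed on $\Omega_s$ where $H(s,\cdot)$ is analytic, so one must justify the boundary version by approximating radially through $\phi_s(re^{i\psi_0})$ for $r<1$, using \eqref{logmod} together with the monotonicity of $s\mapsto I_s$, and then passing to the limit $r\to 1^-$ inside the resulting real-part identity.
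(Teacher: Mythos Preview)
Your approach is essentially the contrapositive of the paper's argument and rests on the same mechanism: points in the support of $\nu_t$ are exactly those $\zeta\in\mathbb{T}$ with $\Re H(t,\zeta)>0$, and this positivity forces the $\phi_t$-preimage to lie on $\partial\Omega_t\cap\mathbb{D}$, i.e.\ to have angular coordinate in $I_t$. The difference is that you arrive there by differentiating $|\phi_s(e^{i\psi_0})|\equiv 1$ along the boundary characteristic, whereas the paper simply uses the \emph{integral} identity \eqref{logmod} at $r=R_t(\theta)$:
\[
\int_0^t \Re H\bigl(s,\phi_s(R_t(\theta)e^{i\theta})\bigr)\,ds=-\ln R_t(\theta),
\]
together with nonnegativity of the integrand and its continuity in $s$, to conclude that $\Re H(t,\phi_t(R_t(\theta)e^{i\theta}))>0$ implies $-\ln R_t(\theta)>0$, hence $R_t(\theta)<1$ and $\theta\in I_t$.

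So the ``main technical obstacle'' you flag in your last paragraph---justifying the boundary ODE via a radial limit---is entirely avoidable: the paper never differentiates on $\mathbb{T}$ at all. Your monotonicity claim $I_s\subset I_t$ for $s\le t$ is correct (it follows directly from the definition of $h_s$ rather than from shrinking domains), and your separate handling of the atoms at $\pm1$ is reasonable but also unnecessary under the paper's formulation, since positivity of $\Re H$ near an atom is automatic. In short: your argument is sound in outline, but the integral route is both shorter and free of the boundary-regularity issue you yourself identify.
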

\begin{proof}
By \eqref{logmod}, we have
\begin{align*}
\int_0^t\Re H\left(s,\phi_s\left( R_t(\theta)e^{i\theta}\right)\right)ds=-\ln R_t(\theta)
\end{align*}
where we used the fact that $ \ln\left|\phi_t\left( R_t(\theta)e^{i\theta}\right)\right|=0$ due to the equality $\left|\phi_t\left( R_t(\theta)e^{i\theta}\right)\right|=1$.
Then, by continuity of $s\mapsto \Re H\left(s,\phi_s\left( R_t(\theta)e^{i\theta}\right)\right)$ on $[0,t]$, we deduce that the assertion 
$\Re H\left(t,\phi_t\left( R_t(\theta)e^{i\theta}\right)\right)>0$ yields $R_t(\theta)\neq1$. Finally, by definition of $R_t(\theta)$ and $I_t$, we have
\begin{align*}
\{\theta:R_t(\theta)\neq1\} =&\{\theta: \exists\ r_0 \in(0,1),\ h_t(r_0,e^{i\theta})=0\}
\\=&\{\theta:h_t(\theta)<0\}
\\=&I_t.
\end{align*}
\end{proof}

\begin{pro}
The density  $\kappa_t$ of $\nu_t-a\delta_{\pi}-b\delta_0$ belongs to $L^\infty(\mathbb{T})$.
\end{pro}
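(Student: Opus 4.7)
The plan is to use the intermediate form of the density that appears inside the proof of Theorem \ref{descrip}, namely
\[
\kappa_t(\theta) = \Re\sqrt{K(t,e^{i\theta})^2 - c(\theta)^2}, \qquad c(\theta) := a\tan(\theta/2) - b\cot(\theta/2),
\]
together with Lemma \ref{ext}, which asserts that $K(t,\cdot)$ has a continuous (hence bounded) extension to the compact set $\mathbb{T}$. A naive modulus bound on the radicand is useless because $c(\theta)$ blows up at $\theta=0,\pm\pi$, so the real issue is to exploit a cancellation between $K^2$ and $c^2$.

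The key step is the elementary identity, valid for every complex number $z$ with the principal branch of the square root,
\[
(\Re\sqrt{z})^2 = \frac{|z| + \Re z}{2},
\]
which follows from writing $\sqrt{z}=u+iv$ with $u\geq 0$ and noting $|z|=u^2+v^2$, $\Re z=u^2-v^2$. Applying this with $z = K(t,e^{i\theta})^2 - c(\theta)^2$ and combining it with the triangle inequality $|K^2-c^2|\leq |K|^2 + c^2$ (legitimate because $c$ is real) makes the singular $c^2$ cancel exactly:
\[
\kappa_t(\theta)^2 \leq \frac{|K|^2 + c^2 + \Re(K^2) - c^2}{2} = \frac{|K|^2 + \Re(K^2)}{2} = (\Re K(t,e^{i\theta}))^2,
\]
where the last equality uses $|K|^2+\Re(K^2)=2(\Re K)^2$. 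Consequently $\kappa_t(\theta) \leq |K(t,e^{i\theta})|$ pointwise on $\mathbb{T}$, and Lemma \ref{ext} combined with compactness of $\mathbb{T}$ yields $\|\kappa_t\|_{L^\infty(\mathbb{T})} \leq \sup_{\zeta \in \mathbb{T}} |K(t,\zeta)|<\infty$.

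The main obstacle I anticipate is conceptual rather than technical: one has to resist the reflex of bounding $|K^2-c^2|$ by $|K|^2+c^2$ inside the square root itself (which gives the useless estimate $\sqrt{|K|^2+c^2}$, still blowing up at $\theta=0,\pm\pi$) and instead feed the triangle inequality into the identity $(\Re\sqrt{z})^2=(|z|+\Re z)/2$, where the addition in $|z|$ and the subtraction in $\Re z$ telescope against each other to annihilate the singular term. Once this algebraic observation is in place, the result is a one-line consequence of the continuous extension supplied by Lemma \ref{ext}.
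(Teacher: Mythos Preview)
Your proof is correct and reaches exactly the same pointwise inequality $\kappa_t\leq |K(t,\cdot)|$ that the paper obtains. The two arguments differ only in packaging. The paper works inside $\mathbb{D}$: it factors $K(t,z)^2=L(t,z)\bigl(L(t,z)+2g(z)\bigr)$ with $g(z)=a\frac{1-z}{1+z}+b\frac{1+z}{1-z}$, uses $\Re L\geq 0$ and $\Re g\geq 0$ on $\mathbb{D}$ to get $(\Re L)^2\leq |K|^2$, and then appeals to the fact that $K\in H^\infty(\mathbb{D})$ (invoking a result from Koosis) to transfer this to the boundary density. You instead work directly on $\mathbb{T}$, exploit that $g(e^{i\theta})$ is purely imaginary so that $g^2=-c(\theta)^2$ with $c$ real, and feed the triangle inequality into the identity $(\Re\sqrt{z})^2=(|z|+\Re z)/2$, which makes the singular $c^2$ cancel exactly. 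Your route is slightly sharper (you get $\kappa_t\leq |\Re K|$ rather than $\kappa_t\leq |K|$) and more self-contained, since it bypasses the Hardy-space reference and closes using only the continuous extension of $K$ from Lemma~\ref{ext} together with compactness of $\mathbb{T}$.
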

\begin{proof}
By \eqref{defK}, we have
\begin{align*}
K(t,z)^2&=H(t,z)^2-\left(a\frac{1-z}{1+z}+b\frac{1+z}{1-z} \right)^2
\\&=L(t,z)\left(L(t,z)+2a\frac{1-z}{1+z}+2b\frac{1+z}{1-z}\right).
\end{align*}
Then
\begin{align*}
(\Re L(t,z))^2\leq\Re L(t,z)\Re\left(L(t,z)+2a\frac{1-z}{1+z}+2b\frac{1+z}{1-z}\right)\leq |K(t,z)^2|.
\end{align*}
Since the function $K(t,z)$ is analytic in $\mathbb{D}$ and extends continuously to $\mathbb{T}$, it becomes then
 of Hardy class $H^{\infty}(\mathbb{D})$, and hence the density of $\nu_t-a\delta_\pi-b\delta_0$ belongs to $L^\infty(\mathbb{T})$ by \cite[Theorem p. 15]{Koo}.
\end{proof}
We now proceed to the proof of Theorem \ref{description mu}.
 
\begin{proof}[ Proof of Theorem \ref{description mu}]

Using Theorem \ref{descrip} and in \cite{Tar}, we have 
\begin{align*}
\nu_t-a\delta_{\pi}-b\delta_0=2[\hat{\mu_t}-(1-\min\{\tau(P),\tau(Q)\})\delta_{\pi}-\max\{\tau(P)+\tau(Q)-1,0\}\delta_0].
\end{align*}
From  Theorem 4.3, this measure is absolutely continuous with respect to the normalized Lebesgue measure $d\theta/2\pi$  on $\mathbb{T}=(-\pi,\pi]$ with density the function $\kappa_t$. Hence, by \eqref{symm}, we have 
\begin{align*}
(\tilde{\mu_t}-(1-\min\{\tau(P),\tau(Q)\})\delta_{\pi}-\max\{\tau(P)+\tau(Q)-1,0\}\delta_0)(d\theta)= \kappa_t(\theta)\frac{d\theta}{2\pi}, \quad \theta\in [0,\pi]
\end{align*}
and so the desired assertion holds via the variable change $\theta=2\arccos(\sqrt{x})$.
\end{proof}



\section{Special cases}
We present here some specializations for which the measure  $\nu_t$ (and hence $\mu_t$) is explicitly determined. 
\subsection{Centered initial operators}
 i.e. $\tau(R)=\tau(S)=0$ or $a=b=0$.
In this case, the PDE \eqref{pde} rewrites
 \begin{equation*}
\partial_tH+zH\partial_zH= 0,
\end{equation*}
and the measure $\nu_{t}$  becomes identical to the probability distribution of  $UU_{2t}$ where $U$ is a free unitary whose distribution is $\nu_0$ (see \cite[Proposition 3.3]{Izu-Ued} or \cite[Remark 4.7]{Tar}). Hence, the measure $\nu_t$ is given by the multiplicative free convolution $\nu_0\boxtimes\lambda_{2t}$, studied by Zhong in \cite{Zho}.  The density of this measure and its support are explicitly computed in \cite[Theorem 3.8 and Corollary 3.9]{Zho}.
In particular, when $\nu_0$ is a Dirac mass at 1 (on the unit circle), the Herglotz transforms $H(t,z)$ of $\nu_t$ satisfy the PDE 
 \begin{equation*}
\partial_tH+zH\partial_zH= 0,\quad H(0,z)=\frac{1+z}{1-z}.
\end{equation*}
Then it follows from uniqueness of solution of \eqref{standardpde} that $H(t,z)=H_{\lambda_{2t}}(z)$, and by uniqueness of Herglotz representation, $\nu_t$ coincide with the law $\lambda_{2t}$ of $U_{2t}$. Hence, by Theorem \ref{bian} 
the density of $\nu_t$ is given by the formula $\kappa_t(\omega)=\rho_{2t}(\omega)$ and the support is the full unit circle for $t>2$ and the set $\{e^{i\theta}: |\theta|<g(2t)\}$ for $t\in [0,2]$.

In the rest of the paper, we illustrate how the family of measure $(\nu_t)_{t\geq0}$ provides a continuous interpolation between freeness and different type of independence.

\subsection{Free initial operators}
If $R$ and $S$ are free, then Proposition 2.5 in \cite{Tar}, implies that
\begin{align*}
H(0,z)=\sqrt{1+4z\left(\frac{b^2}{(1-z)^2}-\frac{a^2}{(1+z)^2}\right)}.
\end{align*}
Then it follows from \eqref{defK} that
\begin{align*}
K(0,z)=\sqrt{H(0,z)^2-\left(a\frac{1-z}{1+z}+b\frac{1+z}{1-z} \right)^2}=\sqrt{1-(a+b)^2}.
\end{align*}
But the facts exposed (under the ODEs \eqref{characteristic}) in section 3 show that $K(t,z)=K(0,\phi_t^{-1}(z))$ holds for every $z\in\mathbb{D}$. 
This implies that $K(t,z)=\sqrt{1-(a+b)^2}$ for any $t\geq0$, 
and therefore $\nu_t$ coincides with the measure $\nu_\infty$. 

\subsection{Classically independent initial operators }
In this case, the measure $\nu_t$ is considered as a $t$-free convolution which interpolates between classical independence and free independence (see \cite{Ben-Lev}). 
Let $R,S$ two independent symmetries, from the facts exposed above Lemma 5.4 in \cite{Tar}, we have
\begin{equation*}
 H(0,z)=1+2\sum_{n\geq 1} \tau(R^n)\tau(S^n) z^n=\frac{1+z^2+2z\tau(R)\tau(S) }{1-z^2}.
\end{equation*} 
In particular, when $\tau(R)=\tau(S)=0$, the function $H(t,z)$ satisfies the PDE 
 \begin{equation*}
\partial_tH+zH\partial_zH= 0,\quad H(0,z)=\frac{1+z^2}{1-z^2}
\end{equation*}
and hence,  by \eqref{standardpde}, it coincide with $H_{\lambda_{4t}}(z^2)$.
We retrieve then the result obtained in \cite[Theorem 3.6]{Ben-Lev}: for any $t\geq0$, the push-forward of $\nu_t$ by the map $z\mapsto z^2$ coincide with the law of $U_{4t}$. In particular,
the density of $\nu_t$ is given by $\kappa_t(\omega)=\rho_{4t}(\omega^2)$ for any $\omega$ in the unit circle and the support is the full unit circle for $t>1$ and the set $\{e^{i\theta}: |\theta|<g(4t)/2\}$ for $t\in [0,1]$.

\subsection{Boolean independent initial operators }
To a given probability measure $\mu$ on  the unit circle, we keep the same notations $\psi_{\mu},H_{\mu}$ and $\chi_{\mu}$ as in section 2.  Let $\mu_1,\mu_2\in\mathscr{M}_{\mathbb{T}}$ and set $F_{\mu}(z)=\frac{1}{z}\chi_{\mu}(z)$. Then the multiplicative boolean convolution $\mu=\mu_1 \bmultg \mu_2$ is uniquely
determined  by (see \cite{Tarek15} or \cite{Franz04}  for more details)
\[
F_\mu(z) = F_{\mu_1}(z)F_{\mu_2}(z).
\]
Then, for boolean independent symmetries  $R,S$ with law $\mu=\frac{\delta_1+\delta_{-1}}{2}$, we have 
\[
\psi_{\mu}(z)=\frac{z^2}{1-z^2},\quad \chi_{\mu}(z)=z^2,\quad  F_{\mu}(z)=z
\]
and therefore $F_{\mu \bmultg \mu}(z)=F_{\mu}(z)^2=z^2$.
It follows that
\[
\psi_{\mu \bmultg \mu}(z)=\frac{z^3}{1-z^3}\quad {\rm and}\ H_{\mu \bmultg \mu}(z)=\frac{1+z^3}{1-z^3}.
\]
Hence, by \eqref{standardpde} the Herglotz transform $H(t,z)$ of $\nu_t$ and $H_{\lambda_{6t}}(z^3)$ solve the same PDE with the initial condition $H(0,z)=(1+z^3)/(1-z^3)$. By uniqueness, it follows that
 the push-forward of $\nu_t$ by the map $z\mapsto z^3$ coincide with the law of $U_{6t}$, for any $t\geq0$. In particular, we have $\kappa_t(\omega)=\rho_{6t}(\omega^3)$ for any $\omega$ in the unit circle and $\nu_t$ is supported in the full unit circle for $t>2/3$ and the set $\{e^{i\theta}: |\theta|<g(6t)/3\}$ for $t\in [0,2/3]$.

\subsection{Monotone independent initial operators }

For $\mu_1,\mu_2\in\mathscr{M}_{\mathbb{T}}$, the multiplicative monotone convolution $\mu=\mu_1 \triangleright \mu_2$ is uniquely
determined  by (see \cite{Tarek15} or \cite{Franz05}  for more details)
\[
\chi_\mu(z) = \chi_{\mu_1}\big(\chi_{\mu_2}(z)\big).
\]
Here, we shall compute the measure $\nu_t$  for monotone independent symmetries  $R,S$ with law $\mu=\frac{\delta_1+\delta_{-1}}{2}$. As usual, we have
\[
\psi_{\mu}(z)=\frac{z^2}{1-z^2},\quad \chi_{\mu}(z)=z^2,
\]
and then $\chi_{\mu \triangleright \mu}(z)=\chi_{\mu}\big(\chi_{\mu}(z)\big)=z^4$.
Hence,
\[
\psi_{\mu \triangleright \mu}(z)=\frac{z^4}{1-z^4}\quad {\rm and}\ H_{\mu \triangleright \mu}(z)=\frac{1+z^4}{1-z^4}.
\]
It follows that $H(t,z)=H_{\lambda_{8t}}(z^4)$ by uniqueness. Thus,
 the push-forward of $\nu_t$ by the map $z\mapsto z^4$ coincide with the law of $U_{8t}$, for any $t\geq0$. In particular, we have $\kappa_t(\omega)=\rho_{8t}(\omega^4)$ for any $\omega$ in the unit circle and $\nu_t$ is supported in the full unit circle for $t>1/2$ and the set $\{e^{i\theta}: |\theta|<g(8t)/4\}$ for $t\in [0,1/2]$.

Finally, we remind (see the section 5.1) that $\nu_t=\nu_0\boxtimes\lambda_{2t}$  for centered initial operators  $R,S$ (i.e. $\tau(R)=\tau(S)=0$). Hence, the discussions so far can be summarized in the Theorem \ref{convol}.







\begin{thebibliography}{99}
\bibitem{Ben-Lev}\emph{F. Benaych-Goerges, T. L\'evy}. A continuous semigroup of notions of independence between the classical and the free one. {\it Ann. Probab}. {\bf 39} (2011), 904-938.
\bibitem{Biane}\emph{P. Biane}. Free Brownian motion, free stochastic calculus and random matrices. {\it Fields. Inst. Commun.},  {\bf 12}, Amer. Math. Soc. Providence, RI,  (1997), 1-19.
\bibitem{Biane1}\emph{P. Biane}. Segal-Bargmann transform, functional calculus on matrix spaces and the theory of semi-circular and circular systems. {\it J. Funct. Anal.} {\bf 144}. (1997),  232-286.
\bibitem{CMR}\emph{J. Cima, A. L. Matheson, W. T. Ross}. The Cauchy transform. {\it Mathematical Surveys and Monographs, 125. American Mathematical Society}.
\bibitem{Col-Kem}\emph{B. Collins, T. Kemp}. Liberation of projections. {\it J. Funct. Anal.}  {\bf 266} (2014), 1988-2052.
\bibitem{Dem}\emph{N. Demni}. Free Jacobi processes. {\it J. Theor. Proba}. {\bf 21} (2008), 118-143.
\bibitem{Demni}\emph{N. Demni}. Free Jacobi process associated with one projection: local inverse of the flow. {\it Complex Anal. Oper. Theory}. { \bf 10} (2016), 527-543.
\bibitem{Dem-Ham}\emph{N. Demni, T. Hamdi}. Inverse of the flow and moments of the free Jacobi process associated with one projection. {\it Preprint} (2016)  arXiv:1611.00233
\bibitem{DHH}\emph{N. Demni, T. Hamdi, T. Hmidi}. Spectral distribution of the free Jacobi process. {\it Indiana Univ. Math. J}. {\bf 61} (2012), 1351-1368.
\bibitem{Dem-Hmi}\emph{N. Demni, T. Hmidi}. Spectral distribution of the free Jacobi process associated with one projection. {\it Colloq. Math.} {\bf 137} (2014), 271-296. 
\bibitem{Dyk-Nic-Voi}\emph{K. J. Dykema, A. Nica, D. V. Voiculescu}. Free Random Variables. {\it CRM Monograph Series, 1}. 1992.
\bibitem{Franz05}\emph{U. Franz}. Multiplicative monotone convolutions. {\it Banach Center Publications} {bf 73} (2006),153-166.
\bibitem{Franz04}\emph{U. Franz}.  {\em {B}oolean convolution of probability measures on the unit circle}. {\it Analyse et probabilit\'es, S\'emin. Congr.} {\bf16} (2008), 83-94.
\bibitem{Tarek15}\emph{T. Hamdi}. Monotone and boolean unitary Brownian motions. {\it  Infin. Dimens. Anal. Quantum. Probab. Relat. Top.} {\bf18} (2015), 1550012.
\bibitem{Tar}\emph{T. Hamdi}. Liberation, free mutual information and orbital free entropy. Preprint (2017), {\it arXiv:1702.05783}.
\bibitem{Tarek}\emph{T. Hamdi}. Free mutual information for two projections. Preprint (2017),  {\it arXiv:1710.05986}.
\bibitem{Hia-Ued}\emph{F. Hiai, Y. Ueda}. A log-Sobolev type inequality for free entropy of two projections. {\it Ann. Inst. H. Poincaré Probab. Statist.} {\bf 45} (2009), 239-249. 
\bibitem{Izu-Ued}\emph{M. Izumi, Y. Ueda.} Remarks on free mutual information and orbital free entropy. {\it Nagoya Math. J} {\bf 220} (2015), 45-66. 
\bibitem{Koo}\emph{P. Koosis}. Introduction to $H_p$ Spaces, 2nd ed., with two appendices by V. P. Havin. {\it Cambridge Tracts in Math} {\bf 115}, {\it Cambridge University Press, Cambridge}, 1998.  
\bibitem{Law}\emph{G. F. Lawler}. Conformally Invariant Processes in the plane. {\it Mathematical Surveys and Monographs} {\bf 114}, {\it Americal Mathematical Society, Providence, RI}, 2005.  
\bibitem{Nic-Spe}\emph{A. Nica, R. Speicher}. Lectures on the Combinatorics of Free Probability. {\it London Mathematical Society Lecture Note Series, vol. 335}. 2006.
\bibitem{Rae-Sin}\emph{I. Raeburn, A. Sinclair}. The $C^∗$-algebra generated by two projections. {\it Math. Scand.} {\bf 65} (1989), 278-290.
\bibitem{Rains}\emph{Rains, E. M.} Combinatorial properties of Brownian motion on the compact classical groups. {\it J. Theoret. Probab.} {\bf 10} (1997), 659-679.
\bibitem{Voi}\emph{D. V. Voiculescu}. The analogues of entropy and of Fisher's information measure in free probability theory. VI. Liberation and mutual free information. {\it Adv. Math.} {\bf 146} (1999), 101-166. 
\bibitem{Zho}\emph{P. Zhong}. On the free convolution with a free multiplicative analogue of the normal distribution. {\it J. theor. Probab.} {\bf 28} (2015),  1354-1379.
\end{thebibliography}
\end{document}